\newcommand{\F}{{\mathbb{F}}}
\newcommand{\N}{{\mathbb{N}}}
\newcommand{\Q}{{\mathbb{Q}}}
\newcommand{\Z}{{\mathbb{Z}}}
\newcommand{\id}{\mathrm{id}}
\newcommand{\pr}{\mathrm{pr}}
\renewcommand{\mod}{\;\mathrm{mod}\;}
\newcommand{\ord}{\mathrm{ord}}
\newcommand{\Ker}{\mathrm{Ker}\,}
\newcommand{\talpha}{\tilde{\alpha}}
\newcommand{\Gh}{{\mathcal G}}
\newcommand{\ovarphi}{\overline{\varphi}}
\newcommand{\silo}{\xrightarrow{\sim}}
\newcommand{\verk}{\mbox{\scriptsize $\,\circ\,$}}
\newtheorem{theorem}{Theorem}[section]
\newtheorem{lemma}[theorem]{Lemma}
\newtheorem*{rem}{Remark}
\begin{document}
\title{$\Z R$ and rings of Witt vectors $W_S (R)$}
\author{Christopher Deninger\footnote{supported by CRC 878} \and Anton Mellit\footnote{supported by the Austrian Science Fund (FWF) through the START-Project Y963-N35 of Michael Eichmair}}
\date{\ }
\maketitle
\section{Introduction} \label{sec:1}
For a commutative ring $R$, let $\Z R$ be the monoid algebra of $(R, \cdot)$. Let $T$ be a divisor stable subset of the natural numbers $\N$ and consider the ring $W_T (R)$ of $T$-Witt vectors. The Teichm\"uller map $R \to W_T (R)$ is multiplicative and hence extends uniquely to a ring homomorphism $\alpha_T : \Z R \to W_T (R)$. We are interested in the kernel of this map. If $R$ has no $T$-torsion, the ghost map $\Gh_T : W_T (R) \to R^T$ is injective and hence $\ker \alpha_T = \ker (\Gh_T \verk \alpha_T)$ consists of the elements $x = \sum_{r \in R} n_r [r] \in \Z R$ which satisfy the equations
\[
\sum_{r \in R} n_r r^{\nu} = 0 \quad \text{for} \; \nu \in T \; .
\]
If $R$ is a perfect $\F_p$-algebra and $T = \{ 1 , \ldots , p^{n-1} \}$ then $\ker \alpha_T = I^n$, where $I$ is the kernel of the map $\Z R \to R$ sending $x$ to $\sum n_r r$. In this case the induced map $\Z R / I^n \silo W_T (R)$ is actually an isomorphism, see \cite{CD1}. More generally, $\ker \alpha_T$ is known for all $\F_p$-algebras $R$ with injective Frobenius map, see \cite{CD2} Theorem 7.1. 

In the present note we use $\lambda$-ring structures to describe $\ker \alpha_T$ for more general rings $R$ and certain subsets $T = S_N$ obtained as follows. Fix a divisor stable subset $S$ which is also multiplicatively closed. Thus $S$ consists of all natural numbers whose prime divisors lie in a given set of prime numbers. Fix some $1 \le N \le \infty$ and set $S_N := \{ \nu \in S \mid \nu < N \}$. For a $\Z_S = \Z [p^{-1} , p \notin S]$-algebra $R$ all its (truncated) Witt rings are $\Z_S$-algebras as well, see \cite{H}, Lemma 1.9. Thus the Teichm\"uller map $R \to W_{S_N} (R)$ induces a homomorphism of $\Z_S$-algebras:
\begin{equation}
\label{eq:1}
\alpha_{S_N} : \Z_S R = \Z R \otimes_{\Z} \Z_S \longrightarrow W_{S_N} (R) \; .
\end{equation}
Let $\pi : \Z_S R \to R$ be the map sending $\sum n_r [r]$ to $\sum n_r r$, and for an integer $n \ge 1$ write $n_S = \prod_{p \in S} p^{\ord_p (n)}$. Then $n_S \in S$ since $S$ is multiplicatively closed. The following result holds:

\begin{theorem}
\label{t1}
Consider the unique (special) $\lambda$-ring structure $(\lambda^n_S)$ on $\Z_S R$ whose associated Adams operators $\psi^n_S : \Z_S R \to \Z_S R$ are determined by the formula $\psi^n_S [r] = [r]^{n_S}$ for $r \in R$. Then we have
\[
\ker \alpha_{S_N} = \{ x \in \Z_S R \mid \pi \lambda^n_S (x) = 0 \; \text{for} \; 1 \le n < N \} \; .
\]
\end{theorem}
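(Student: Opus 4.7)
The plan is as follows.

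\textbf{Setup.} The formula $\psi^n_S[r]=[r]^{n_S}$ extends $\Z_S$-linearly to compatible ring endomorphisms of $\Z_S R$ (using $(mn)_S=m_S n_S$), and satisfies $\psi^p_S(x)\equiv x^p\pmod p$ for every prime $p$ (trivially when $p\notin S$, where $p$ is a unit; by a direct check on generators when $p\in S$, since $n^p\equiv n\pmod p$). Because $\Z_S R$ is $\Z_S$-torsion-free, Wilkerson's criterion produces the unique special $\lambda$-ring structure $(\lambda^n_S)$ realizing these Adams operators. Likewise, $W_S(R)$ carries its natural $\lambda$-ring structure with Adams operators $F_{n_S}$, and the Teichm\"uller extension $\tilde\alpha\colon\Z_S R\to W_S(R)$ is verified to be a $\lambda$-ring homomorphism on generators via $\tilde\alpha(\psi^n_S[r])=[r^{n_S}]_W=F_{n_S}([r]_W)$. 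With $\pi_{S_N}\colon W_S(R)\twoheadrightarrow W_{S_N}(R)$ the canonical projection, $\alpha_{S_N}=\pi_{S_N}\tilde\alpha$, and $\Gh_1\tilde\alpha=\pi$.

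\textbf{Inclusion $I_N\subseteq\ker\alpha_{S_N}$.} Assume $\pi\lambda^n_S(x)=0$ for $1\le n<N$. Applying $\pi$ to the Newton--Girard identity
\[
\psi^k_S=\lambda^1_S\psi^{k-1}_S-\lambda^2_S\psi^{k-2}_S+\cdots+(-1)^{k-1}k\lambda^k_S,
\]
every summand on the right carries a factor $\pi\lambda^j_S(x)$ with $1\le j\le k<N$ and therefore vanishes; so $\pi\psi^k_S(x)=0$ for all $1\le k<N$. In particular the ghost components $\Gh_\nu(\alpha_{S_N}(x))=\pi\psi^\nu_S(x)$ for $\nu\in S_N$ all vanish.

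\textbf{Inclusion $\ker\alpha_{S_N}\subseteq I_N$.} Using that $\tilde\alpha$ is a $\lambda$-ring homomorphism, we obtain $\pi\lambda^n_S(x)=\Gh_1\lambda^n_S(\tilde\alpha(x))$. If $\alpha_{S_N}(x)=0$ then $a:=\tilde\alpha(x)\in\ker\pi_{S_N}$, i.e., the Witt coordinates $a_m$ of $a$ vanish for every $m\in S_N$. I would then establish that $\Gh_1\lambda^n_S(a)$ is, for each $n\ge 1$, a polynomial over $\Z_S$ with vanishing constant term in the Witt coordinates $a_m$ for $m\le n$, so that the required vanishings $\pi\lambda^n_S(x)=0$ for $1\le n<N$ follow.

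\textbf{Main obstacle.} The crucial technical points are (a) the polynomial identity expressing $\Gh_1\lambda^n_S(a)$ in terms of the $a_m$, and (b) the dual one expressing the Witt coordinates in terms of the quantities $\Gh_1\lambda^n_S(a)$ (needed to close the converse direction without relying on ghost-map injectivity, which generally fails over $\Z_S$-algebras with $S_N$-torsion). Both are proved by unwinding Newton--Girard inside $W_S(R)$; the essential point is that integrality over $\Z_S$---i.e., the absence of denominators at primes in $S$---is afforded by the existence of the $\lambda^n_S$-operations as honest elements of $W_S(R)$ (as opposed to merely $n\lambda^n_S$), which absorbs the factor $n$ that would otherwise appear in the denominators of the recursive expressions.
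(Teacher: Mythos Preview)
Your outline has a genuine gap at the point you yourself flag as the ``main obstacle,'' and the sketch you give there does not close it.

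First, the claim that $W_S(R)$ carries a natural $\lambda$-ring structure with Adams operators $F_{n_S}$ is not justified: Wilkerson's criterion requires $\Z$-torsion-freeness, and $W_S(R)$ can have $p$-torsion for $p\in S$. You could try to define the $\lambda^n_S$ on $W_S(R)$ via universal polynomials, but that construction \emph{is} the missing work, and ``unwinding Newton--Girard'' does not provide it: inverting the Newton relations to express $\lambda^n$ in terms of the $\psi^k$ introduces a denominator $n!$, which is not a unit in $\Z_S$ once $n\ge p$ for some $p\in S$. Your last sentence appeals to the existence of $\lambda^n_S$ on $W_S(R)$ to justify integrality, which is circular.

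Second, even granting the $\lambda$-structure, your identity (b) --- that the Witt coordinates $a_m$ for $m\in S_N$ are $\Z_S$-polynomials with zero constant term in the quantities $\pi\lambda^n_S(x)$ for $n<N$ --- is the heart of the matter and is not proved. Note what it is really saying: from the power series $\sum_{n\ge 0}(-1)^n\pi\lambda^n_S(x)\,t^n\in\Lambda(R)\cong W(R)$ one can recover the element $\alpha_S(x)\in W_S(R)$ by a natural $\Z_S$-polynomial rule. That amounts to producing a functorial ring-homomorphic section $\ovarphi_S:W_S(R)\hookrightarrow W(R)$ of the projection $\pr_S$, together with a compatible truncated version $\ovarphi_{S_N}:W_{S_N}(R)\hookrightarrow W_N(R)$. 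This section is exactly the new ingredient the paper supplies (built as a limit of the operators $T_l=1+l^{-1}V_l(1-F_l)$ over primes $l\notin S$, using that such $l$ are invertible). Once $\ovarphi_{S_N}$ is available and known to be injective, both inclusions fall out at once from the commuting square
\[
\begin{array}{ccc}
\Z_S R & \xrightarrow{\ \alpha_{S_N}\ } & W_{S_N}(R)\\
\lambda_S\big\downarrow & & \big\downarrow\ovarphi_{S_N}\\
\Lambda(\Z_S R) & \xrightarrow{\ \Lambda(\pi)\ } & \Lambda_N(R),
\end{array}
\]
since the bottom-left composite lands in $1+t^N R[[t]]$ precisely when $\pi\lambda^n_S(x)=0$ for $1\le n<N$.

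In short: your Newton--Girard step only shows that the ghost components of $\alpha_{S_N}(x)$ vanish, which is insufficient when $R$ has $S$-torsion; closing the argument requires exactly the section $\ovarphi_{S_N}$ (equivalently, your unproved identity (b)), and that is where the actual content of the proof lies.
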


Existence and uniqueness of the special $\lambda$-ring structure are special cases of a classical result \cite{W}, Proposition 1.2.

\begin{lemma}[Wilkerson]
\label{t2}
Let $B$ be commutative ring without $\Z$-torsion and for $n \ge 1$ let $\psi^n$ be a family of ring endomorphisms of $B$ such that $\psi^1 = \id$ and $\psi^n \verk \psi^m = \psi^{nm}$ and such that $\psi^p (b) \equiv b^p \mod p B$ for all $b \in B$ and all prime numbers $p$. Then there is a unique structure of a (special) $\lambda$-ring on $B$ whose Adams operators are the given maps $\psi^n$.
\end{lemma}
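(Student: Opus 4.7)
The plan is to use the bijection, valid for $\Z$-torsion-free rings $B$, between special $\lambda$-ring structures on $B$ and ring homomorphisms $\tau \colon B \to W(B)$ which section the first ghost component, where $W$ denotes the big Witt ring functor. Under this bijection the $n$-th Adams operation of the resulting $\lambda$-ring structure is the composition $w_n \verk \tau$ with the $n$-th ghost component $w_n \colon W(B) \to B$. So the task reduces to constructing such a $\tau$ whose ghost components are the prescribed $\psi^n$.

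Uniqueness is immediate: since $B$ is $\Z$-torsion-free the ghost map $W(B) \to B^{\N}$ is injective, so any $\tau$ satisfying $w_n \verk \tau = \psi^n$ is determined by these ghost components, and then the $\lambda^n$ are determined in turn. For existence, given $b \in B$ I need to produce $\tau(b) \in W(B)$ with ghost components $\psi^n(b)$, and this is where Dwork's lemma enters. In its torsion-free form, fixing for each prime $p$ a ring endomorphism $\phi_p$ of $B$ satisfying $\phi_p(x) \equiv x^p \mod pB$, it asserts that a sequence $(c_n)_{n \ge 1} \in B^{\N}$ lies in the image of the ghost map if and only if
\[
c_{pn} \equiv \phi_p(c_n) \mod p^{1 + \ord_p(n)} B
\]
for every prime $p$ and every $n \ge 1$. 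The hypothesis $\psi^p(x) \equiv x^p \mod pB$ says that $\psi^p$ is precisely such a Frobenius lift, so I may take $\phi_p := \psi^p$. Then, setting $c_n := \psi^n(b)$, the multiplicativity relation $\psi^{pn} = \psi^p \verk \psi^n$ gives $c_{pn} = \psi^p(c_n)$ outright, so Dwork's congruence holds with equality and the required $\tau(b)$ exists (and is unique).

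It remains to check that $b \mapsto \tau(b)$ is a ring homomorphism sectioning the augmentation. Each $w_n$ is a ring homomorphism, and $w_n \verk \tau = \psi^n$ is a ring homomorphism by hypothesis; combined with injectivity of the ghost map this forces $\tau$ itself to respect addition and multiplication. The condition $\psi^1 = \id$ translates to $w_1 \verk \tau = \id$, i.e.\ $\tau$ sections the first ghost projection. This produces the desired special $\lambda$-ring structure on $B$, whose Adams operations are by construction the given $\psi^n$. The main non-routine input is Dwork's lemma, which I invoke as a black box; given it, the argument is a clean verification.
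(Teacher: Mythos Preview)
The paper does not give its own proof of this lemma; it is quoted as a classical result of Wilkerson \cite{W}, Proposition~1.2, and used as a black box. Your argument is the standard one and is essentially correct.

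One small imprecision worth flagging: the bijection you invoke at the outset, between ring homomorphisms $\tau \colon B \to W(B)$ sectioning $w_1$ and structures on $B$, is really a bijection with \emph{pre}-$\lambda$-ring structures. The ``special'' condition is the extra requirement that $\tau$ be coassociative for the Witt comonad, i.e.\ that $W(\tau)\verk\tau$ agree with the Artin--Hasse map $W(B)\to W(W(B))$ composed with $\tau$. Since $B$ is $\Z$-torsion-free this can be checked after applying ghost maps, where it becomes precisely the hypothesis $\psi^m\verk\psi^n=\psi^{mn}$. So the structure you build is indeed special, but this deserves a sentence. With that addendum your proof is complete, and it is in fact the argument one finds in the literature (and implicitly the one the paper has in mind when it later refers to ``Wilkerson's Lemma \ref{t2} and the congruences used in its proof'').
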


The main ingredient in the proof of Theorem \ref{t1} is a (unital) ring homomorphism $\ovarphi_S : W_S (R) \to W (R)$ for $\Z_S$-algebras $R$ which splits the canonical projection $W (R) \to W_S (R)$.

Adapting a method of Dwork in the theory of $p$-adic formal power series, we obtain explicit albeit complicated formulas for the operations $\lambda^n_S$ in Theorem \ref{t1}. They are given as follows. Let $\mu$ be the Moebius function and for $x \in \Z_S R$ and $k \in S$ set:
\begin{equation}
\label{eq:2}
\tau_k (x) = k^{-1} \sum_{d \mid k} \mu (d) \psi^{k/d}_S (x) \; .
\end{equation}
For an $S$-tuple $\nu = (\nu_k)_{k \in S}$ with all $\nu_k \ge 0$ we write
\[
{\tau (x) \choose \nu} = \prod_{k \in S} {\tau_k (x) \choose \nu_k} \in \Z R \otimes_{\Z} \Q \; .
\]
Set $|\nu| = \sum_{k \in S} \nu_k$ and $\| \nu \| = \sum_{k \in S} k \nu_k$. 

\begin{theorem}
\label{t3}
In the situation of Theorem \ref{t1}, for $n \ge 1$ and $x \in \Z_S R$ the following explicit formula holds in $\Z_S R$:
\[
(-1)^n \lambda^n_S (x) = \sum_{\| \nu \| = n} (-1)^{|\nu|} {\tau (x) \choose \nu} \; .
\]
\end{theorem}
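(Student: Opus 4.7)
The plan is to verify the stated formula by rephrasing it as a generating-function identity and then checking that identity via Newton's formula in $\lambda$-rings together with Moebius inversion. Since $\Z_S R$ is free over $\Z_S$ and hence $\Z$-torsion-free, it embeds in $A := \Z_S R \otimes_\Z \Q$, so it suffices to verify the identity in $A$. The $\lambda$-ring structure of Theorem~\ref{t1} extends uniquely to $A$ with the same Adams operations, and in any $\lambda$-ring over $\Q$ Newton's formula gives
\[
\Lambda(t) \;:=\; \sum_{n \geq 0}(-1)^n \lambda^n_S(x)\,t^n \;=\; \exp\!\left(-\sum_{n \geq 1}\frac{\psi^n_S(x)}{n}\,t^n\right) \quad \text{in } A[[t]].
\]

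The heart of the argument is to rewrite the exponent as $-\sum_{k \in S}\tau_k(x)\log(1-t^k)$. Expanding $\log(1-t^k) = -\sum_{m \geq 1}t^{km}/m$ and comparing the $t^n$-coefficients on both sides, this reduces to
\[
\psi^n_S(x) \;=\; \sum_{k \in S,\; k \mid n} k\,\tau_k(x).
\]
Since $\psi^n_S = \psi^{n_S}_S$ and the divisors of $n$ lying in $S$ are exactly the divisors of $n_S$, the claim reduces further to proving $\psi^m_S(x) = \sum_{k \mid m}k\,\tau_k(x)$ for $m \in S$. Substituting the definition $k\,\tau_k(x) = \sum_{d \mid k}\mu(d)\psi^{k/d}_S(x)$ and reindexing $k = de$ turns the double sum into $\sum_{e \mid m}\psi^e_S(x)\sum_{d \mid m/e}\mu(d)$, which collapses to $\psi^m_S(x)$ by the Moebius identity $\sum_{d \mid \ell}\mu(d) = [\ell = 1]$.

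Exponentiating then gives $\Lambda(t) = \prod_{k \in S}(1-t^k)^{\tau_k(x)}$ in $A[[t]]$. Each factor expands by the generalized binomial theorem as $\sum_{\nu_k \geq 0}(-1)^{\nu_k}\binom{\tau_k(x)}{\nu_k}t^{k\nu_k}$; only the finitely many $k$ with $k \mid n$ contribute to the coefficient of $t^n$, so the product is a well-defined formal power series. Reading off this coefficient yields the formula of Theorem~\ref{t3}. No substantive obstacle arises: the computation reduces cleanly to Newton's formula plus one Moebius inversion. The only conceptual point is that the individual binomials $\binom{\tau_k(x)}{\nu_k}$ lie in $A$ and not in $\Z_S R$, yet their total sum is forced into $\Z_S R$ by its equality with $(-1)^n\lambda^n_S(x)$.
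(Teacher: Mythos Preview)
Your argument is correct and follows essentially the same route as the paper: both establish the product identity $\lambda_S(x)=\prod_{k\in S}(1-t^k)^{\tau_k(x)}$ by reducing, via Newton's formula (equivalently, via the logarithmic derivative $-t\partial_t\log$ used in the paper), to the coefficient identity $\psi^n_S(x)=\sum_{k\in S,\,k\mid n}k\,\tau_k(x)$, which is then checked by M\"obius inversion; the binomial expansion of the product then yields the stated formula. One inconsequential slip: in your remark on convergence, the $k$ that can contribute to the $t^n$-coefficient are those with $k\le n$, not only those with $k\mid n$.
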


The methods work in the more general situation where instead of $\Z_S R$ we start with a $\Z$-torsionfree $\Z_S$-algebra $B$ which is equipped with commuting Frobenius lifts $\psi^p_S$ for all primes $p \in S$. Setting $\psi^p_S = \id$ for $p \notin S$, we show that the corresponding $\lambda$-ring structure is given by the same formula as in Theorem \ref{t3}. Moreover any homomorphism $\pi : B \to R$ into a $\Z_S$-algebra $R$ factors canonically over maps $\alpha_{S_N} : B \to W_{S_N} (R)$ and the kernel of $\alpha_{S_N}$ is obtained as in Theorem \ref{t1}.

For background on the theory of Witt vectors we refer to \cite{H}, \cite{R}, \cite{BW} and \cite{CD2}. The latter approaches avoid universal polynomials.

Both authors would like to thank the HIM in Bonn, where this work originated for its support.
\section{A projector on Witt vector rings} \label{sec:2}
All rings are associative, commutative and unital. All ring homomorphisms are unital. For any commutative ring $A$ we give $A$ the discrete topology and $A^{\N}$ the product topology. Then $W (A) \equiv A^{\N}$ is a topological ring and the ghost map $\Gh : W (A) \to A^{\N}$ is continuous.

For $S$ as in the introduction, consider the ring homomorphism:
\[
\varphi_S : A^{\N} \longrightarrow A^{\N} \; , \; \varphi_S ((a_n)_{n \ge 1}) = (a_{n_S})_{n \ge 1} \; .
\]
It maps $0 \times A^{\N \setminus S}$ to zero and therefore factors over $A^S \cong A^{\N} / (0 \times A^{\N \setminus S})$. Note that $\varphi^2_S = \varphi_S$. For $k \ge 1$, Frobenius and Verschiebung maps from $A^{\N}$ to $A^{\N}$ are defined as follows:
\[
F_k ((a_n)_{n \ge 1}) = (a_{nk})_{n \ge 1} \quad \text{and} \quad V_k ((a_n)_{n \ge 1}) = k (\delta_{k \mid n} a_{n/k})_{n \ge 1} \; .
\]
Here, $\delta_{k\mid n} = 1$ if $k \mid n$ and $ = 0$ if $k \nmid n$. Now assume that $A$ is a $\Z_S$-algebra and for any prime $l \notin S$ consider the map
\[
T_l = 1 + l^{-1} V_l (1 - F_l) : A^{\N} \longrightarrow A^{\N} \; .
\]
It is a ring endomorphism because of the formula:
\[
T_l ((a_n)_{n \ge 1}) = (b_n)_{n \ge 1}
\]
where $b_n = a_n$ if $l \nmid n$ and $b_n = a_{n/l}$ if $l \mid n$. For different primes $l$ and $l'$ not in $S$, the endomorphisms $T_l$ and $T_{l'}$ commute with each other. For $m$ prime to $S$, set
\[
T_m = \prod_l T^{\ord_l m}_l \; .
\]
Then the following limit formula holds in the pointwise topology:
\begin{equation}
\label{eq:3}
\varphi_S = \lim_{\nu \to \infty} T_{m_{\nu}} \; .
\end{equation}
Here $(m_{\nu})$ is any sequence of positive integers prime to $S$ such that $m_{\nu} \mid m_{\nu +1}$ for all $\nu$ and such that any number prime to $S$ is a divisor of some $m_{\nu}$. For example, if $l_1 , l_2 , \ldots$ are the primes not in $S$ we could take $m_{\nu} = (l_1 \cdots l_{\nu})^{\nu}$.

There are also other ways to express $\varphi_S$. Firstly, we have
\[
\lim_{\nu \to \infty} T_{l^{\nu}} = \Big( \sum_{k=0}^\infty l^{-k} V_{l^k}\Big) (1-l^{-1} V_l F_l) \; .
\]
This can be either verified directly by looking at the action on
sequences or deduced from the formula for $T_l$ and the identity
$F_k\circ V_k=k$. This leads to the following formula, where the sums are over all positive integers prime to $S$:
\[
\varphi_S = \Big(\sum_{(n,S)=1} n^{-1} V_n \Big) \Big( \sum_{(n,S)=1} \mu(n) n^{-1} V_n F_n \Big) \; .
\]

Frobenius and Verschiebung operators also exist on the ring $W (A)$ of (big) Witt vectors and they correspond to Frobenius and Verschiebung on $A^{\N}$ via the ghost map $\Gh : W (A) \to A^{\N}$. If the $\Z_S$-algebra $A$ has no $\Z$-torsion, then $\Gh$ is injective and it follows from formula \eqref{eq:3} that there is a unique ring homomorphism $\varphi_S : W (A) \to W (A)$ making the diagram
\[
\xymatrix{
W (A) \ar[d]_{\varphi_S} \ar@{^{(}->}[r]^{\Gh} & A^{\N} \ar[d]^{\varphi_S} \\
W (A) \ar@{^{(}->}[r]^{\Gh} & A^{\N}
}
\]
commute. It also follows that $\varphi_S$ factors uniquely over the canonical projection $\varphi_S : W (A) \xrightarrow{\pr_S} W_S (A) \xrightarrow{\ovarphi_S} W (A)$.

We have
\begin{equation}
\label{eq:4}
\pr_S \verk \ovarphi_S = \id \; ,
\end{equation}
since this is true after applying the ghost map. 

Now let $R$ be {\it any} $\Z_S$-algebra and define $\varphi_S : W(R) \to W (R)$ by the pointwise limit \eqref{eq:3}. Convergence to a well defined ring homomorphism follows by comparison with the map $\varphi_S$ for a $\Z$-torsion free $\Z_S$-algebra $A$ surjecting onto $R$. In the same way we prove a unique factorization:
\begin{equation}
\label{eq:5}
\varphi_S : W (R) \xrightarrow{\pr_S} W_S (R) \xrightarrow{\ovarphi_S} W (R)
\end{equation}
and the formula
\begin{equation}
\label{eq:6}
\pr_S \verk \ovarphi_S = \id \quad \text{on} \; W_S (R) \; .
\end{equation}
In particular the (unital) ring homomorphism $\ovarphi_S : W_S (R) \hookrightarrow W (R)$ is injective. By construction the maps $\varphi_S$ and $\ovarphi_S$ are functorial with respect to $R$. It is clear that $\varphi^2_S = \varphi_S$. 

We need a version of the maps $\ovarphi$ for the truncation sets $S_N$: For any $\Z_S$-algebra $R$ without $\Z$-torsion, it follows by comparing with the ghost side that there is a unique ring homomorphism $\ovarphi_{S_N}$ such that the diagram
\begin{equation}
\label{eq:7}
\xymatrix{
W_S (R) \ar@{->>}[r] \ar[d]_{\ovarphi_S} & W_{S_N} (R) \ar[d]^{\ovarphi_{S_N}} \\
W (R) \ar@{->>}[r] & W_N (R)
}
\end{equation}
commutes. Here $W_N (R) = W_{\{ 1 \le \nu < N \}} (R)$. The point is that $n < N$ implies $n_S < N$. For the projection $\pr_{S_N} : W_N (R) \to W_{S_N} (R)$ we have $\pr_{S_N} \verk \ovarphi_{S_N} = \id$. Similarly as before, it follows that unique functorial ring homomorphisms $\ovarphi_{S_N}$ with the same properties exist for arbitrary $\Z_S$-algebras $R$. 
\section{Proofs of Theorems \ref{t1} and \ref{t3}} \label{sec:3}
For any ring $A$ we give the set $\Lambda (A) = 1 + t A [[t]]$ the unique ring structure, for which the bijection
\begin{equation}
\label{eq:8}
W (A) = A^{\N} \silo \Lambda (A) \; , \; (a_1 , a_2 , \ldots) \longmapsto \prod^{\infty}_{n=1} (1 - a_n t^n)
\end{equation}
is an isomorphism. Then $\Lambda (A)$ is a topological ring for the $t$-adic topology whose multiplication is uniquely determined by the formula
\[
(1 - a_1 t) \cdot (1 - a_2 t) = 1 - a_1 a_2 t \quad \text{for} \; a_1 , a_2 \in A \; .
\]
The addition in $\Lambda (A)$ is given by the multiplication of power series. We will usually view the topological isomorphism \eqref{eq:8} as an identification. There is a commutative diagram of ring homomorphisms
\begin{equation}
\label{eq:9}
\xymatrix{
W (A) \ar[r]^{\Gh} \ar@3{-}[d] & A^{\N} \ar@3{-}[d] \\
\Lambda (A) \ar[r]^{-t \partial_t \log} & tA [[t]] \; .
}
\end{equation}
Here we view $t A [[t]]$ as a commutative ring with the coefficientwise multiplication of power series, the Hadamard product. 

As before let $S \subset \N$ be divisor stable and multiplicatively closed. Let $B$ be a $\Z$-torsion free $\Z_S$-algebra with commuting Frobenius lifts $\psi^p_S$ for all primes $p \in S$. For $n \ge 1$ we set
\[
\psi^n_S = \prod_{p \in S} (\psi^p_S)^{\ord_{p^n}} \; .
\]
Let $(\lambda^n_S)$ be the special $\lambda$-ring structure on $B$ with Adams operators $\psi^n_S$, according to Lemma \ref{t2}. Consider the ring homomorphism
\[
\lambda_S : B \longrightarrow \Lambda (B)
\]
defined by the formula
\[
\lambda_S (x) = \sum^{\infty}_{i=0} (-1)^i \lambda^i_S (x) t^i \; .
\]
Setting
\[
\psi_S (x) = \sum^{\infty}_{n=1} \psi^n_S (x) t^n
\]
we have
\[
\psi_S (x) = -t \partial_t \log \lambda_S (x) \; .
\]
Using diagram \eqref{eq:9} for $A = B$ we may interpret $\lambda_S$ as the unique ring homomorphism $\talpha : B \to W(B)$ such that $\Gh \verk \talpha$ maps $x \in B$ to $(\psi^n_S (x))_{n \ge 1} \in B^{\N}$. We have $\talpha = \varphi_S \verk \talpha$ since after applying the injective ghost map, this amounts to the equality $\psi^n_S = \psi^{n_S}_S$ for $n \ge 1$. Hence we get
\[
\talpha = \varphi_S \verk \talpha = \ovarphi_S \verk \pr_S \verk \talpha = \ovarphi_S \verk \talpha_S \; .
\]
Here $\talpha_S = \pr_S \verk \talpha : B \to W_S (B)$ is the unique ring homomorphism such that $\Gh_S \verk \talpha_S$ maps $x \in B$ to $(\psi^n_S (x))_{n \in S} \in B^S$. In conclusion, we have a commutative diagram:
\begin{equation}
\label{eq:10}
\xymatrix{
B \ar[r]^{\talpha_S} \ar[d]_{\lambda_S} & W_S (B) \ar[d]^{\ovarphi_S} \\
\Lambda (B) \ar@3{-}[r] & W (B) \; .
}
\end{equation}

\begin{rem}
In the case $B = \Z_S R$ considered in Theorem \ref{t1}, the map $\talpha_S$ is the unique $\Z_S$-algebra homomorphism extending the multiplicative map $R \to W_S (\Z_S R)$ which sends $r$ to the Teichm\"uller representative of $[r]$. This follows by comparing ghost components.
\end{rem}

Let $\pi : B \to R$ be a map of $\Z_S$-algebras. For $1 \le N \le \infty$ consider the composition 
\[
\alpha_{S_N} : B \xrightarrow{\talpha_S} W_S (B) \xrightarrow{W_S (\pi)} W_S (R) \longrightarrow W_{S_N} (R) \; .
\]
For $B = \Z_S R \xrightarrow{\pi} R$, by the remark on $\talpha_S$ above, $\alpha_{S_N}$ agrees with the map \eqref{eq:1} in the introduction. Hence Theorems \ref{t1} and \ref{t3} are special cases of the following result:

\begin{theorem}
\label{t4}
With notations as above, we have
\[
\Ker \alpha_{S_N} = \{ x \in B \mid \pi \lambda^n_S (x) = 0 \quad \text{for} \; 1 \le n < N \} \; .
\]
Moreover, with $\tau_k (x) \in B \otimes \Q$ as in \eqref{eq:2}, the following formula holds in $\Lambda (B)$: 
\begin{equation}
\label{eq:11}
\lambda_S (x) = \prod_{k \in S} (1 - t^k)^{\tau_k (x)} \; .
\end{equation}
Equivalently, with notations as in Theorem \ref{t3} we have:
\[
(-1)^n \lambda^n_S (x) = \sum_{\| \nu \| = n} (-1)^{|\nu|} {\tau (x) \choose \nu} \quad \text{in $B$, for all $n \ge 1$} \; .
\]
\end{theorem}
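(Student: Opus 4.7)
The plan has three parts matching the three assertions of the theorem: establish the kernel description using the injective splitting $\ovarphi_{S_N}$, verify \eqref{eq:11} via the injective logarithmic derivative on $\Lambda(B \otimes \Q)$, and deduce the explicit expansion formally.

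For the kernel description, the key observation is that $\pr_{S_N} \verk \ovarphi_{S_N} = \id$ on $W_{S_N}(R)$, so the ring homomorphism $\ovarphi_{S_N}$ is injective and $\ker \alpha_{S_N} = \ker(\ovarphi_{S_N} \verk \alpha_{S_N})$. Using the commutativity of \eqref{eq:7}, the functoriality identity $\ovarphi_S \verk W_S(\pi) = W(\pi) \verk \ovarphi_S$, and diagram \eqref{eq:10}, this composition reduces to the map
\[
B \xrightarrow{\lambda_S} \Lambda(B) \xrightarrow{W(\pi)} \Lambda(R) \xrightarrow{\pr_N} W_N(R),
\]
where $\pr_N$ is the standard truncation. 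Under the identification \eqref{eq:8}, the kernel of $\pr_N$ corresponds to the ideal $1 + t^N R[[t]]$, and $W(\pi)$ acts coefficient-wise on power series. Since $\lambda_S(x) = \sum_{i \ge 0}(-1)^i \lambda^i_S(x) t^i$, this says exactly that $x \in \ker \alpha_{S_N}$ iff $\pi(\lambda^i_S(x)) = 0$ for all $1 \le i < N$.

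To prove \eqref{eq:11}, I would pass to $B \otimes \Q$, which is legitimate because $B$ is $\Z$-torsion-free, and apply the logarithmic derivative $-t \partial_t \log$, which is injective on $\Lambda(B \otimes \Q)$ since its kernel consists of series $f$ with $\log f$ constant, forcing $f = 1$. The left-hand side maps to $\psi_S(x) = \sum_{n \ge 1} \psi^n_S(x) t^n$ by definition, and the right-hand side maps to
\[
\sum_{k \in S} \tau_k(x) \cdot \frac{k t^k}{1 - t^k} = \sum_{n \ge 1}\Big(\sum_{\substack{k \mid n \\ k \in S}} k \tau_k(x)\Big) t^n.
\]
Thus it suffices to verify $\sum_{k \mid n,\, k \in S} k \tau_k(x) = \psi^n_S(x)$ for every $n \ge 1$. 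Substituting \eqref{eq:2} and switching the order of summation, the left-hand side becomes $\sum_{e \mid n_S}\psi^e_S(x) \sum_{f \mid n_S/e}\mu(f)$, where I have used that divisors of $n$ lying in $S$ are precisely the divisors of $n_S$ by divisor stability and multiplicative closure of $S$. Standard Möbius inversion collapses the inner sum to $[n_S/e = 1]$, yielding $\psi^{n_S}_S(x) = \psi^n_S(x)$ by the definition of the composite Adams operator.

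The explicit expansion is then purely formal: expanding each factor as $(1 - t^k)^{\tau_k(x)} = \sum_{j \ge 0}(-1)^j \binom{\tau_k(x)}{j}\, t^{kj}$, convergent in $\Lambda(B \otimes \Q)$ for the $t$-adic topology, multiplying over $k \in S$, and reading off the coefficient of $t^n$, one gets $\sum_{\|\nu\| = n}(-1)^{|\nu|}\binom{\tau(x)}{\nu}$, which matches $(-1)^n \lambda^n_S(x)$. The only genuine obstacle is the Möbius-inversion identity of the second paragraph, where the restriction "$k \in S$" in the sum has to be reconciled with the convention $\psi^n_S = \psi^{n_S}_S$; everything else is diagram chasing or termwise expansion of a convergent power-series product.
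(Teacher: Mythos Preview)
Your proof is correct and follows essentially the same route as the paper's: the kernel assertion via the injectivity of $\ovarphi_{S_N}$ and the same diagram chase through \eqref{eq:7}, \eqref{eq:10}, and functoriality of $\ovarphi_S$; formula \eqref{eq:11} via the logarithmic derivative and M\"obius inversion; and the explicit expansion by multiplying out the binomial series. If anything, your treatment of the M\"obius step is slightly more careful than the paper's, since you keep the constraint $k \in S$ explicit and invoke $\psi^{n_S}_S = \psi^n_S$ at the end rather than tacitly summing over all $k \ge 1$.
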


\begin{proof}
Using diagrams \eqref{eq:7}, \eqref{eq:10} and the functoriality of $\ovarphi_S$ we get a commutative diagram
\[
\xymatrix{
B \ar[r]^{\talpha_S} \ar[d]_{\lambda_S} & W_S (B) \ar[r]^{W_S (\pi)} \ar[d]^{\ovarphi_S} & W_S (R) \ar[r] \ar[d]^{\ovarphi_S} & W_{S_N} (R) \ar[d]^{\ovarphi_{S_N}} \\
\Lambda (B) \ar@3{-}[r] & W(B) \ar[r]^{W (\pi)} & W (R) \ar[r] & W_N (R) \; .
}
\]
Identifying $W_N (R)$ with $\Lambda_N (R) = \Lambda (R) / (1 + t^N R [[t]])$, the outer square becomes
\[
\xymatrix{
B \ar[r]^{\alpha_{S_N}} \ar[d]_{\lambda_S} & W_{S_N} (R) \ar[d]^{\ovarphi_{S_N}} \\
\Lambda (B) \ar[r]^{\Lambda (\pi)} & \Lambda_N (R) \; .
}
\]
Since $\ovarphi_{S_N}$ is injective being a splitting of the projection $\pr_{S_N} : W_N (R) \to W_{S_N} (R)$, the first assertion of Theorem \ref{t4} follows. In order to prove formula \eqref{eq:11} it suffices to show the equality after applying $-t \partial_t \log$, i.e. the formula
\begin{equation}
\label{eq:12}
\psi_S (x) = \sum^{\infty}_{k=1} \tau_k (x) \frac{kt^k}{1 - t^k} \; .
\end{equation}
Generally, we have the identity of formal power series
\[
\sum^{\infty}_{k=1} a_k \frac{t^k}{1 - t^k} = \sum^{\infty}_{n=1} A_n t^n \; .
\]
where $A_n = \sum_{\nu \mid n} a_{\nu}$. In our case $a_k = k \tau_k (x)$, we obtain
\[
A_n = \sum_{\nu \mid n} \sum_{d \mid \nu} \mu (d) \psi^{\nu / d}_S (x) = \psi^n_S (x) \; ,
\]
by Moebius inversion. Thus formula \eqref{eq:12} and hence the Theorem are proved.
\end{proof}

A priori the product $\prod_{k \in S} (1-t^k)^{\tau_k (x)}$ lies in $\Lambda (B \otimes \Q)$ but its equality with $\lambda_S (x)$ shows that it lies in $\Lambda (B)$. The required integrality comes from Wilkerson's Lemma \ref{t2} and the congruences used in its proof. For $S = \{ 1 , p , p^2 , \ldots \}$ such products were considered by Dwork in his proof of Weil's rationality conjecture for zeta functions of varieties over finite fields. For $p^i \in S$ we have:
\[
\tau_1 (x) = x \quad \text{and} \quad \tau_{p^i} (x) = p^{-i} (\psi^{p^i}_S (x) - \psi^{p^{i-1}}_S (x)) \; \text{for} \; i \ge 1 \; .
\]
Thus equation \eqref{eq:11} asserts:
\begin{equation}
\label{eq:13}
\lambda_S (x) = (1 - t)^x \prod^{\infty}_{i=1} (1 - t^{p^i})^{\tau_{p^i} (x)} \; . 
\end{equation}

In \cite{D} p. 2, using slightly different notation, Dwork considers the following product in the formal power series ring $\Q [[t,X]]$:
\begin{equation}
\label{eq:14}
F (X,t) = (1+t)^X \prod^{\infty}_{i=1} (1 + t^{p^i})^{p^{-i} (X^{p^i} - X^{p-1})} \; .
\end{equation}
Using his well-known criterion \cite{D} Lemma 1, he shows that the coefficients of $F (X,t)$ are $p$-integral. Our sign conventions concerning $\Lambda$-and Witt rings are not quite compatible with Dwork's. However, for odd $p$ we can relate $F (X,t)$ to $\lambda_S$ and $\varphi_S$ as follows. For $S$ as above, we have $\Z_S = \Z [l^{-1} \mid l \neq p]$. Equip the $\Z$-torsion free $\Z_S$-algebra $B = \Z_S [X]$ with the Frobenius lift $\psi^p_S$ defined by $\psi^p_S (X) = X^p$. Then, as in the beginning of this section the corresponding $\lambda$-ring structure on $\Z_S [X]$ is encoded in a ring homomorphism
\[
\lambda_S : \Z_S [X] \longrightarrow \Lambda (\Z_S [X]) = 1 + t \Z_S [X] [[t]] \subset \Z_S [[t,X]] \; .
\]
Comparing \eqref{eq:13} and \eqref{eq:14}, we see that for $p \neq 2$ we have:
\begin{equation}
\label{eq:15}
F (X, -t) = \lambda_S (X) \; .
\end{equation}
In particular the $p$-integrality of \eqref{eq:14} follows. In terms of the map 
\[
\ovarphi_S : W_S (\Z_S [X]) \to W (\Z_S [X]) \equiv \Lambda (\Z_S [X])
\]
we have
\[
F (X , -t) = \ovarphi_S (\langle X \rangle) \; .
\]
Here $\langle X \rangle$ is the Teichm\"uller representative of $X$. This follows from diagram \eqref{eq:10} and formula \eqref{eq:15} noting that $\talpha_S (X) = \langle X \rangle$. The latter equality holds because $(\Gh_S \verk \talpha_S) (X) = (\psi^{p^i}_S (X)) = (X^{p^i})$ by the characterization of $\talpha_S$ and since $\Gh_S (\langle X \rangle) = (X^{p^i})$. 

\end{document}